\newtheorem{lemma}{Lemma}
\newtheorem{theorem}[lemma]{Theorem}
\newtheorem{definition}[lemma]{Definition}
\newcommand{\C}{\mathbf{C}}
\newcommand{\D}{\mathbf{D}}
\newcommand{\BPos}{\mathbf{BPos}}
\newcommand{\OMP}{\mathbf{OMP}}
\newcommand{\EA}{\mathbf{EA}}
\newcommand{\DP}{\mathbf{DP}}
\DeclareMathOperator{\id}{id}
\DeclareMathOperator{\card}{card}
\begin{document}
\title[The Kalmbach monad]{Effect algebras are the Eilenberg-Moore category for the Kalmbach monad}
\author{Gejza Jen\v ca}
\address{
Department of Mathematics and Descriptive Geometry\\
Faculty of Civil Engineering\\
Slovak Technical University\\
Radlinsk\' eho 11\\
	Bratislava 813 68\\
	Slovak Republic
}
\email{gejza.jenca@stuba.sk}
\thanks{
This research is supported by grants VEGA G-1/0297/11,G-2/0059/12 of M\v S SR,
Slovakia and by the Slovak Research and Development Agency under the contracts
APVV-0073-10, APVV-0178-11.
}
\subjclass{Primary: 03G12, Secondary: 06F20, 81P10} 
\keywords{effect algebra, Kalmbach extension, orthomodular poset, monad} 
\begin{abstract}
The Kalmbach monad is the monad that arises from the free-forgetful
adjunction between bounded posets and orthomodular posets. 
We prove that the category of effect algebras is isomorphic to the 
Eilenberg-Moore category for the Kalmbach monad.
\end{abstract}

\maketitle
\section{Introduction}

In \cite{kalmbach1977orthomodular}, Kalmbach proved the following theorem.
\begin{theorem}\label{thm:kalmbach}
Every bounded lattice $L$ can be embedded into an orthomodular lattice $K(L)$.
\end{theorem}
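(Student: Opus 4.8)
The plan is to build $K(L)$ by hand as a set of equivalence classes of \emph{marked chains} in $L$ and then to check the axioms directly. Assuming $0\neq 1$ in $L$ (the one-element case being trivial), take as datum a finite chain $0=c_0<c_1<\dots<c_n=1$ in $L$ together with a subset $S\subseteq\{1,\dots,n\}$ marking some of the ``gaps'' $[c_{i-1},c_i]$, and think of this as coding the formal region $\bigcup_{i\in S}(c_{i-1},c_i]$. Declare two such data equivalent when one is obtained from the other by repeatedly inserting a point into a gap and giving the two new gaps the old gap's mark (``parity-preserving refinement''), and let $K(L)$ be the set of equivalence classes. The orthocomplement of the class of $(c_\bullet,S)$ is the class of $(c_\bullet,\{1,\dots,n\}\setminus S)$: this is manifestly well defined and involutive, and it interchanges the classes of $(0<1,\emptyset)$ and $(0<1,\{1\})$, which will be the bottom $0$ and top $1$ of $K(L)$. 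The embedding is $\iota\colon a\mapsto$ (class of the two-gap chain $0<a<1$ with only the gap $[0,a]$ marked).

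The main work is to equip $K(L)$ with an order making it a lattice. I would define $x\le y$ by representing $x,y$ over chains $c_\bullet,d_\bullet$, passing to the finite sublattice $M$ of $L$ generated by all the $c_i$ and $d_j$, and comparing the induced regions inside $M$, antisymmetrising if necessary. I would then verify that $K(L)$ is a bounded poset and, crucially, that binary meets and joins exist and are computed, over a large enough common sublattice, by intersection and union of the induced regions; in particular $\iota(a)\wedge\iota(b)=\iota(a\wedge b)$ and $\iota(a)\vee\iota(b)=\iota(a\vee b)$. Once meets and joins are available, the orthocomplement laws (involutivity, order reversal, $x\wedge x'=0$, $x\vee x'=1$) all reduce to the fact that over a common chain the marked and unmarked gaps partition $\{1,\dots,n\}$; and the orthomodular law $x\le y\Rightarrow x\vee(y\wedge x')=y$ reduces to the set identity $S_x\cup(S_y\setminus S_x)=S_y$ once one checks that $x\le y$ lets us represent $x$ and $y$ over a common chain with $S_x\subseteq S_y$.

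Finally, $\iota$ is injective (the class of $(0<a<1,\{1\})$ determines $a$ as the top of its unique marked gap) and reflects order ($\iota(a)\le\iota(b)$, evaluated over the sublattice generated by $\{0,a,b,1\}$, forces $a\le b$); since it also preserves $\wedge$ and $\vee$ by the above, it embeds $L$ as a sublattice of the orthomodular lattice $K(L)$. I expect the genuine obstacle to be the lattice structure itself: checking that the ``region over a common sublattice'' recipe does not depend on the choices made and really yields greatest lower bounds and least upper bounds when $L$ has incomparable elements is precisely the delicate point, and it is what forces the heavier bookkeeping with refinements.
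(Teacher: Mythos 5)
Your marked chains modulo parity-preserving refinement are, after merging adjacent marked gaps into maximal blocks, exactly Kalmbach's even-length chains $[x_1<\dots<x_{2n}]$ recorded in the paper's definition of $K(P)$ (the class codes the region $(x_1,x_2]\cup\dots\cup(x_{2n-1},x_{2n}]$), and your orthocomplement and embedding agree with the symmetric difference with $\{0,1\}$ and with $a\mapsto[0<a]$. So the objects are right, but the gap sits exactly where you place it, and your proposed fix does not work. The order cannot be defined by comparing induced regions inside the finite sublattice generated by the chosen representatives: that relation is not invariant under refinement of a representative (an inserted point enters the sublattice and the region of $x$, but need not enter the region of $y$), so it does not descend to equivalence classes, and it does not in general agree with the correct order. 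The order that works is purely combinatorial and makes no reference to any ambient sublattice: $[x_1<\dots<x_{2n}]\leq[y_1<\dots<y_{2k}]$ iff every block $(x_{2i-1},x_{2i}]$ is nested in a \emph{single} block of $y$, in the strong sense that $y_{2j-1}\leq x_{2i-1}<x_{2i}\leq y_{2j}$ for some $j$.

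The second problem is that meets and joins are \emph{not} computed by union and intersection of regions, and this claim contradicts your own (correct) assertion that $\iota(a)\vee\iota(b)=\iota(a\vee b)$: for incomparable $a,b$ the set $(0,a]\cup(0,b]$ does not contain $a\vee b$ and is not the region of any even chain, whereas the join of $[0<a]$ and $[0<b]$ in $K(L)$, read off from the order above, is $[0<a\vee b]$. Likewise, for $a\not\leq b$ the only common upper bound of $[0<a]$ and $[b<1]$ turns out to be $[0<1]$, far larger than the union of the two regions. Establishing that arbitrary binary joins and meets exist in $K(L)$ is the actual substance of Kalmbach's theorem and requires an explicit case analysis on how the blocks of the two operands interleave; your sketch defers this rather than supplying it. The reductions you describe at the end are sound: if $x\leq y$ in the correct order, the union of the two representing chains is again a chain, so the orthocomplementation laws and the orthomodular law do collapse to Boolean identities on the marked index sets over a common chain. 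But without a correct, representative-independent definition of the order and a proof that joins exist, the argument does not yet constitute a proof.
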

The proof of the theorem is constructive, $K(L)$ is known under the name {\em Kalmbach
extension} or {\em Kalmbach embedding}. In \cite{mayet1995classes}, Mayet and Navara proved
that Theorem \ref{thm:kalmbach} can be generalized: every bounded poset $P$ can be embedded
in an orthomodular poset $K(P)$. 
In fact, as proved by Harding in \cite{harding2004remarks}, this $K$ is then left adjoint to the forgetful
functor from orthomodular posets to bounded posets. This adjunction gives rise to a monad on the category
of bounded posets, which we call the {\em Kalmbach monad}.

For every monad $(T,\eta,\mu)$ on a category $\C$, there is a standard notion {\em Eilenberg-Moore} category $\C^T$
(sometimes called the {\em category of algebras} or the {\em category of modules} for $T$). The category
$\C^T$ comes equipped with a canonical adjunction between $\C$ and $\C^T$ and this adjunction 
gives rise to the original monad $T$ on $\C$.

In the present paper we prove that the Eilenberg-Moore category for the Kalmbach monad is isomorphic to the category
of effect algebras.

\section{Preliminaries}

We assume familiarity with basics of category theory, see
\cite{mac1998categories,awodey2006category} for reference.

\subsection{Bounded posets}
A {\em bounded poset} is a structure $(P,\leq,0,1)$ such that
$\leq$ is a partial order on $P$, $0,1\in P$ are the bottom and top
elements of $(P,\leq)$, respectively.

Let $P_1,P_2$ be bounded posets. A map $\phi:P_1\to P_2$ is a 
{\em morphism of bounded posets} if and only if it satisfies the following
conditions.
\begin{itemize}
\item $\phi(1)=1$ and $\phi(0)=0$.
\item $\phi$ is isotone.
\end{itemize}

The category of bounded posets is denoted by $\BPos$.

\subsection{Effect algebras}

An {\em effect algebra} 
is a partial algebra $(E;\oplus,0,1)$ with a binary 
partial operation $\oplus$ and two nullary operations $0,1$ satisfying
the following conditions.
\begin{enumerate}
\item[(E1)]If $a\oplus b$ is defined, then $b\oplus a$ is defined and
		$a\oplus b=b\oplus a$.
\item[(E2)]If $a\oplus b$ and $(a\oplus b)\oplus c$ are defined, then
		$b\oplus c$ and $a\oplus(b\oplus c)$ are defined and
		$(a\oplus b)\oplus c=a\oplus(b\oplus c)$.
\item[(E3)]For every $a\in E$ there is a unique $a'\in E$ such that
		$a\oplus a'$ exists and $a\oplus a'=1$.
\item[(E4)]If $a\oplus 1$ is defined, then $a=0$.
\end{enumerate}

Effect algebras were introduced by Foulis and Bennett in their paper 
\cite{FouBen:EAaUQL}.

In an effect algebra $E$, we write $a\leq b$ if and only if there is
$c\in E$ such that $a\oplus c=b$.  It is easy to check that for every effect
algebra $E$, $\leq$ is a partial order on $E$.  Moreover, it is possible to introduce
a new partial operation $\ominus$; $b\ominus a$ is defined if and only if $a\leq
b$ and then $a\oplus(b\ominus a)=b$.  It can be proved that, in an effect
algebra, $a\oplus b$ is defined if and only if $a\leq b'$ if and only if $b\leq
a'$. In an effect algebra, we write $a\perp b$ if and only if $a\oplus b$ exists.

Let $E_1$, $E_2$ be effect algebras. A map $\phi:E_1\to E_2$ is called a
{\em morphism of effect algebras} if and only if it satisfies the following conditions.
\begin{itemize}
\item $\phi(1)=1$.
\item If $a\perp b$, then $\phi(a)\perp\phi(b)$ and $\phi(a\oplus b)=\phi(a)\oplus\phi(b)$.
\end{itemize}

The category of effect algebras is denoted by $\EA$. There is an evident forgetful
functor $U:\EA\to\BPos$.

\subsection{D-posets}

In their paper~\cite{KopCho:DP}, Chovanec and K\^ opka introduced
a structure called {\em D-poset}. Their definition
is an abstract algebraic version the {\em D-poset of fuzzy sets},
introduced by K\^ opka in the paper~\cite{Kop:DPFS}.

A D-poset is a system $(P;\leq,\ominus,0,1)$ consisting of a partially
ordered set $P$ bounded by $0$ and $1$ with a partial binary operation
$\ominus$ satisfying the following conditions.
\begin{enumerate}
\item[(D1)] $b\ominus a$ is defined if and only if $a\leq b$.
\item[(D2)] If $a\leq b$, then $b\ominus a\leq b$ and $b\ominus(b\ominus a)=a$.
\item[(D3)] If $a\leq b\leq c$, then $c\ominus b\leq c\ominus a$ and
$(c\ominus a)\ominus (c\ominus b)=b\ominus a$.
\end{enumerate}

Let $D_1,D_2$ be D-posets. A map $\phi:D_1\to D_2$ is called a {\em
morphism of D-posets} if and only if it satisfies the following conditions.
\begin{itemize}
\item $\phi(1)=1$.
\item If $a\leq b$, then $\phi(a)\leq\phi(b)$ and 
$\phi(b\ominus a)=\phi(b)\ominus\phi(a)$.
\end{itemize}

The category of D-posets is denoted by $\DP$.

There is a natural, one-to-one correspondence between D-posets and effect
algebras. Every effect algebra satisfies the conditions (D1)-(D3). When given
a D-poset $(P;\leq,\ominus,0,1)$, one can construct an effect algebra
$(P;\oplus,0,1)$: the domain of $\oplus$ is given by the rule
$a\perp b$ if and only if $a\leq 1\ominus b$ and we then have
$a\oplus b=1\ominus\bigl((1\ominus a)\ominus b\bigr)$. The resulting structure is then
an effect algebra with the same $\ominus$ as the original D-poset.
It is easy to see that this correspondence is, in fact, an isomorphism
of categories $\DP$ and $\EA$.

Another equivalent structure was introduced by Giuntini and
Greuling in~\cite{GiuGre:TaFLfUP}. We refer to~\cite{DvuPul:NTiQS} for more
information on effect algebras and related topics.

The following lemma collects some well-known properties connecting the $\oplus$, $\ominus$ and $'$ operations
in effect algebras (or D-posets). Complete proofs can be found, for example, in Chapter 1 of 
\cite{DvuPul:NTiQS}. We shall use these facts without an explicit reference.
\begin{lemma}~
\begin{enumerate}
\item[(a)] $a\leq b'$ iff $b\leq a'$ iff $a\oplus b$ exists and then
	$$(a\oplus b)'=a'\ominus b=b'\ominus a.$$
\item[(b)] $a\leq b$ iff $a\oplus b'$ exists and then
	$$(b\ominus a)'=a\oplus b'$$
\item[(c)] $a\leq c\ominus b$ iff $b\leq c\ominus a$ iff $a\oplus b\leq c$ and then
	$$c\ominus(a\oplus b)=(c\ominus a)\ominus b=(c\ominus b)\ominus a$$
\item[(d)] $a\leq b\leq c$ iff $a\oplus (c\ominus b)$ exists and then
	$$c\ominus (b\ominus a)=a\oplus (c\ominus b).$$
\end{enumerate}
\end{lemma}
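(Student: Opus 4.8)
\medskip\noindent\emph{Proof proposal.}\quad
Most of the groundwork is already laid above: effect algebras satisfy (D1)--(D3), so in particular $\ominus$ is well defined, $'$ is an involution with $0'=1$, and $a\oplus b$ is defined exactly when $a\leq b'$, equivalently when $b\leq a'$. The plan is then just to verify the four displayed equalities by unwinding the definitions of $\oplus$, $\ominus$ and $'$, keeping in mind that associativity is available only conditionally, through (E2), so that every sum must be presented in precisely the shape (E2) can act on, with (E1) used throughout to rotate sums into such a shape.

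I would begin with part~(a), which is the workhorse for the rest. If $a\oplus b$ is defined then $b\leq a'$, and since $a\oplus b$ and $(a\oplus b)'$ are complementary, $(a\oplus b)\oplus(a\oplus b)'=1$. Applying (E2) here, with $a\oplus b$ taken as the inner sum, shows that $b\oplus(a\oplus b)'$ is defined and $a\oplus\bigl(b\oplus(a\oplus b)'\bigr)=1$; the uniqueness of complements in (E3) then forces $b\oplus(a\oplus b)'=a'$, that is, $(a\oplus b)'=a'\ominus b$. Rewriting $a\oplus b$ as $b\oplus a$ by (E1) and running the same step gives $(a\oplus b)'=b'\ominus a$, which completes~(a).

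Parts (b), (c), (d) I would then deduce from (a), using that $'$ is injective and that (a) allows one to move between $\oplus$ and $\ominus$ under $'$. For (b): from $a\leq b$, part (a) applied to $a\oplus(b\ominus a)=b$ gives $b'=(b\ominus a)'\ominus a$, whence $a\oplus b'=(b\ominus a)'$; the equivalence ``$a\oplus b'$ is defined iff $a\leq b$'' (an instance of ``$a\oplus y$ defined iff $a\leq y'$'') supplies the converse direction. For (c): by (b), the conditions $a\leq c\ominus b$, $b\leq c\ominus a$ and $a\oplus b\leq c$ are equivalent, respectively, to the definedness of $a\oplus(b\oplus c')$, $b\oplus(a\oplus c')$ and $(a\oplus b)\oplus c'$, and these three are equivalent to one another by (E1)--(E2); the displayed identity then follows by taking $'$ of both sides, rewriting with (b), and using that $'$ is injective. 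Part (d) goes the same way, rewriting $c\ominus(b\ominus a)$ and $a\oplus(c\ominus b)$ through complements and reducing to the conditional associativity of $a$, $c\ominus b$ and $b\ominus a$.

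I do not expect a genuine obstacle; the only point of care is the bookkeeping forced by the conditional form of (E2) --- each regrouping must be preceded by exhibiting the relevant sub-sums, usually via a commutativity step --- so the real risk is no more than a slip in a routine computation. Since all of this is entirely standard, in the paper I would not carry out these verifications but would refer, as the statement does, to Chapter~1 of \cite{DvuPul:NTiQS}.
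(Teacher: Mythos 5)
Your proposal is correct and matches the paper's treatment: the paper does not prove this lemma at all, but defers to Chapter~1 of \cite{DvuPul:NTiQS}, exactly as you propose to do. Your sketch of the derivation (part~(a) from (E2)/(E3) plus cancellation, then (b)--(d) by passing through the orthocomplement and conditional associativity) is the standard argument and contains no gaps worth flagging.
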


\subsection{Orthomodular posets}

An {\em orthomodular poset} is a structure $(A,\leq,',0,1)$ such that
$(A,\leq,0,1)$ is a bounded poset and $'$ is a unary operation (called {\em orthocomplementation})
satisfying the following conditions.
\begin{itemize}
\item $x\leq y$ implies $y'\leq x'$.
\item $x''=x$.
\item $x\wedge x'=0$.
\item If $x\leq y'$, then $x\vee y$ exists.
\item If $x\leq y$, then $x\vee (x\vee y')'=y$.
\end{itemize}
If $x\leq y'$, we say that $x,y$ are {\em orthogonal}.

Let $A_1,A_2$ be orthomodular posets. A map $\phi:A_1\to A_2$ is called a {\em
morphism of orthomodular posets} if and only if it satisfies the following conditions.
\begin{itemize}
\item $\phi(1)=1$.
\item If $a\leq b'$, then $\phi(a)\leq\phi(b)'$ and 
$\phi(a\vee b)=\phi(a)\vee \phi(b)$.
\end{itemize}
Alternatively, we may define a morphism of orthomodular posets as order preserving, preserving the
orthocomplementation, and preserving joins of orthogonal elements. 

The category of orthomodular posets is denoted by $\OMP$. An {\em orthomodular lattice}
is an orthomodular poset that is a lattice. We remark that the usual category of orthomodular
lattices, with morphisms preserving joins and meets is not a full subcategory of $\OMP$.

If $A$ is an orthomodular poset, then we may introduce a partial $\oplus$ operation on
$A$ by the following rule: $x\oplus y$ exists iff $x\leq y'$ and then $x\oplus y:=x\vee y$.
The resulting structure is then an effect algebra. This gives us the object part of an evident
full and faithful functor $\OMP\to\EA$.

\subsection{Kalmbach construction}

If $C=\{x_1,\dots,x_n\}$ is a finite chain in a poset $P$,
we write $C=[x_1<\dots<x_n]$ to indicate the partial order.

\begin{definition}\cite{kalmbach1977orthomodular}
Let $P$ be a bounded poset, write
$$
K(P)=\{C:C\text{ is a finite chain in $P$ with even number of elements}\}
$$
Define a partial order on $K(P)$ by the following rule:
$$
[x_1<x_2<\dots<x_{2n-1}<x_{2n}]\leq[y_1<y_2<\dots<y_{2k-1}<y_{2k}]
$$
if for every $1\leq i\leq n$ there is $1\leq j\leq k$ such that
$$
y_{2j-1}\leq x_{2i-1}<x_{2i}\leq y_{2j}.
$$
Define a unary operation $C\mapsto C^\perp$ on $K(P)$ to be the symmetric difference
with the set $\{0,1\}$.
\end{definition}

Originally, Kalmbach considered the construction only for lattices. If $P$ is a
bounded lattice, then $K(P)$ is a lattice as well. Moreover,
$(K(P),\wedge,\vee,~',0,1)$ is an orthomodular lattice.  However, as observed by
Harding in \cite{harding2004remarks}, $K$ is not an object part of a functor from the
category of bounded lattices to the category of orthomodular lattices.

On the positive side, for any bounded poset $P$, $K(P)$ is an orthomodular
poset (see \cite{mayet1995classes}) and $K$ can be made to a functor
$K:\BPos\to\OMP$. Indeed, let $f:P\to Q$ be a morphism in $\BPos$ and define
$K(f):K(P)\to K(Q)$ by the following rule. 

For an arrow $f:P\to Q$ in $\BPos$, write 
\begin{multline*}
K(f)([x_1<x_2<\dots<x_{2n-1}<x_{2n}])=\\
\{y\in Q:\card(\{1\leq i\leq n:f(x_i)=y\})\text{ is odd}\}.
\end{multline*}

A more elegant way how to write the same rule is
$$
K(f)([x_1<x_2<\dots<x_{2n-1}<x_{2n}])=\Delta_{i=1}^{2n}\{f(x_i)\},
$$
where $\Delta$ is the symmetric difference of sets.

Then $K$ is a functor. Moreover, as proved by Harding in \cite{harding2004remarks},
$K$ is left-adjoint to the forgetful functor $U:\OMP\to\BPos$.
Since every functor has (up to isomorphism) at most one adjoint, this can
be viewed as an alternative definition of the Kalmbach construction.

The unit of the $K\dashv U$ adjunction is the natural transformation
$\eta:\id_{\BPos}\to UK$, given by the rule 
$$
\eta_P(a)=
\begin{cases}
[0<a]&a>0\\
\emptyset& a=0
\end{cases}
$$
and the counit of the adjunction is the natural transformation
$\epsilon:KU\to\id_{\OMP}$ given by the rule
$$
\epsilon_L([x_1<\dots <x_{2n}])=
(x_1^\perp \wedge x_2)\vee\dots\vee(x_{2n-1}^\perp \wedge x_{2n}).
$$

\section{Kalmbach monad}

Let $\C$ be a category. A {\em monad on $\C$} can be defined as a monoid in the
strict monoidal category of endofunctors of $\C$. Explicitly, a monad on $\C$ is
a triple $(T,\eta,\mu)$, where $T:\C\to\C$ is an endofunctor of $\C$ and 
$\eta,\mu$ are natural transformations $\eta:\id_\C\to T$,
$\mu:T^2\to T$ satisfying the equations $\mu \circ T\mu = \mu \circ \mu T$ and
$\mu \circ T \eta = \mu \circ \eta T = 1_{T}$.

Every adjoint pair of functors $F:\C\rightleftarrows\D:G$, with
$F$ being left adjoint, gives rise to a monad $(FG,\eta,G\epsilon F)$ on $\C$.

Let $(T,\eta,\mu)$ be a monad on a category $\C$. Recall,
that the {\em Eilenberg-Moore category for $(T,\eta,\mu)$} is a category
(denoted by $\C^T$), such that  
objects (called {\em algebras for that monad}) of $\C^T$ are pairs 
$(A,\alpha)$, where $\alpha:T(A)\to A$, such that the diagrams
\begin{equation}
\label{eq:algtriangle}
\xymatrix{
A \ar[r]^{\eta_A} \ar[rd]_{1_A} & T(A) \ar[d]^{\alpha} \\
 & A
}
\end{equation}
\begin{equation}
\label{eq:algsquare}
\xymatrix{
T^2(A) \ar[r]^{T(\alpha)} \ar[d]_{\mu_A} & T(A) \ar[d]^{\alpha}\\
T(A) \ar[r]_{\alpha} & A
}
\end{equation}
commute. A morphism of algebras 
$h:(A_1,\alpha_1)\to (A_2,\alpha_2)$ is a $\C$-morphism such that
the diagram
$$
\xymatrix{
T(A) \ar[r]^{T(h)} \ar[d]_{\alpha_1}& T(B)\ar[d]^{\alpha_2}\\
A\ar[r]^{h}&B
}
$$
commutes.

Consider now the adjunction $K\dashv U$ between the categories, $\BPos$ and $\OMP$
from the preceding section. This adjunction gives rise to a monad on $\BPos$,
which we will denote $(T,\eta,\mu)$. Explicitly, $T=U\circ K$, $\eta$ remains the same
and $\mu=U\epsilon K$ turns out to be given by the following rule
$$
\mu_P\bigl([C_1<C_2<\dots<C_{2n}]\bigr)=\Delta_{i=1}^{2n}C_i,
$$
where $[C_1<C_2<\dots<C_{2n}]$ is a chain of even length of chains of even length, that
means, an element of $T^2(P)$, and $\Delta$ is the symmetric difference of sets.

\begin{theorem}
The category of effect algebras is isomorphic to the Eilenberg-Moore category for the Kalmbach
monad.
\end{theorem}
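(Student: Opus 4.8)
The plan is to build explicit functors $\Phi\colon\EA\to\BPos^T$ and $\Psi\colon\BPos^T\to\EA$ and prove that they are mutually inverse. Both functors will be the identity on underlying maps, so all of the content lies on objects and in translating the two axiom systems into one another. Throughout I identify $\EA$ with $\DP$ and describe an effect algebra by its $\ominus$.

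Construction of $\Phi$. Send an effect algebra $E$ to $(U(E),\alpha_E)$, where
$$\alpha_E([x_1<x_2<\dots<x_{2n}])=(x_2\ominus x_1)\oplus(x_4\ominus x_3)\oplus\dots\oplus(x_{2n}\ominus x_{2n-1})$$
and $\alpha_E(\emptyset)=0$. First I check this is well defined: by induction on $n$ the partial sums $s_k=\bigoplus_{i\le k}(x_{2i}\ominus x_{2i-1})$ exist and satisfy $s_k\le x_{2k}$, the inductive step being an instance of part (d) of the Lemma. Next, $\alpha_E$ is a $\BPos$-morphism: it preserves $0$ and $1$, and it is isotone because, given $C\le D$ in $T(U(E))$, one groups the intervals of $C$ according to the interval of $D$ containing each, bounds each group below the corresponding difference of $D$, and invokes monotonicity of $\oplus$. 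The triangle law $\alpha_E\circ\eta_{U(E)}=\id_{U(E)}$ is immediate since $\alpha_E([0<a])=a\ominus 0=a$. The substantial point is the associativity square $\alpha_E\circ K(\alpha_E)=\alpha_E\circ\mu_{U(E)}$, which on $[C_1<\dots<C_{2n}]\in T^2(U(E))$ asserts $\alpha_E(\Delta_i C_i)=\alpha_E(\Delta_i\{\alpha_E(C_i)\})$; I treat this below. On arrows $\Phi$ sends an effect-algebra morphism $\phi$ to $\phi$ itself, and that this is a morphism of algebras reduces to $\phi(\alpha_E(C))=\alpha_E(\Delta_i\{\phi(x_i)\})$, which follows by telescoping once one notes that a summand $\phi(x_{2i})\ominus\phi(x_{2i-1})$ vanishes exactly when $\phi$ identifies the adjacent entries, matching the cancellations in the symmetric difference.

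Construction of $\Psi$. Send a $T$-algebra $(B,\alpha)$ to the poset $B$ with $b\ominus a:=\alpha([a<b])$ for $a<b$, and $b\ominus b:=0$. The triangle law gives $\alpha(\emptyset)=0$ and $\alpha(\eta_B(b))=b$, so $b\ominus 0=b$, and since $[a<b]\le\eta_B(b)$ in $T(B)$ and $\alpha$ is isotone we get $b\ominus a\le b$. The axioms (D2) and (D3) come from feeding two-element chains of chains into the associativity square and using the explicit descriptions $\mu_B([C<D])=C\Delta D$ and $K(\alpha)([C<D])=\{\alpha(C)\}\Delta\{\alpha(D)\}$. For (D2), applying the square to $\bigl[[a<b]<[0<b]\bigr]$, whose $\mu$-image is $[0<a]$, gives $b\ominus(b\ominus a)=\alpha([0<a])=a$ (after separately handling $a=0$ and $a=b$); for (D3) one applies it to $\bigl[[b<c]<[a<c]\bigr]$, whose $\mu$-image is $[a<b]$. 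Hence $B$ becomes a D-poset, and $\Psi(B,\alpha)$ is the corresponding effect algebra; for an algebra morphism $h$, that $h$ preserves $\ominus$ is one further use of $K(h)([a<b])=\{h(a)\}\Delta\{h(b)\}$.

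Mutual inverseness and the main obstacle. That $\Psi\Phi=\id_\EA$ is immediate: $\Psi$ recovers $b\ominus a=\alpha_E([a<b])=b\ominus_E a$, so the effect algebra is unchanged, and both functors are the identity on maps. For $\Phi\Psi=\id_{\BPos^T}$ one must show $\alpha_{\Psi(B,\alpha)}=\alpha$, i.e.
$$\alpha([x_1<\dots<x_{2n}])=\bigoplus_{i=1}^{n}\alpha([x_{2i-1}<x_{2i}]),$$
which I prove by induction on $n$ by writing $[x_1<\dots<x_{2n}]=\mu_B\Bigl(\bigl[[x_2<x_3<\dots<x_{2n-1}]<[x_1<x_{2n}]\bigr]\Bigr)$ — the inner chain and the spanning interval $[x_1<x_{2n}]$ are comparable in $T(B)$, and their symmetric difference is the original chain — then applying the square, collapsing $\{\alpha([x_2<\dots<x_{2n-1}])\}\Delta\{\alpha([x_1<x_{2n}])\}$ via isotonicity of $\alpha$, and invoking the induction hypothesis. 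The technical heart, used both here and to prove the associativity square for $\alpha_E$, is the elementary effect-algebra telescoping identity
$$\bigoplus_{i=1}^{n}(x_{2i}\ominus x_{2i-1})=(x_{2n}\ominus x_1)\ominus\bigoplus_{i=1}^{n-1}(x_{2i+1}\ominus x_{2i}),$$
valid for any chain $x_1<\dots<x_{2n}$ and provable by induction from the Lemma; it expresses that $\alpha_E$ behaves like the total length of a finite union of intervals. Granting it, the associativity square for $\alpha_E$ follows by induction on the number of chains $C_i$, peeling off the spanning interval by the same $\mu$-decomposition, the remainder being bookkeeping with symmetric differences. I expect this square — equivalently, this global telescoping/additivity behaviour of $\alpha_E$ — to be the main obstacle; everything else is routine.
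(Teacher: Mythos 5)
Your proposal is correct and follows essentially the same route as the paper: the same formula for the structure map on an effect algebra, the same definition of $\ominus$ on a $T$-algebra, and the same device of feeding the two-element chains $\bigl[[a<b]<[0<b]\bigr]$ and $\bigl[[b<c]<[a<c]\bigr]$ into the associativity square to obtain (D2) and (D3). The one genuine divergence is in the final step $\Phi\Psi=\id$ (and in your handling of the associativity square for $\alpha_E$): you decompose $[x_1<\dots<x_{2n}]$ as the symmetric difference of the inner chain $[x_2<\dots<x_{2n-1}]$ with the spanning interval $[x_1<x_{2n}]$ and then invoke an explicit telescoping identity, whereas the paper peels off the top pair $x_{2n+1},x_{2n+2}$ via the auxiliary claim $\alpha(C\Delta[0<u])=\alpha\bigl([\alpha(C)<u]\bigr)$ applied twice; both inductions work, yours isolating the effect-algebra content in a reusable identity, the paper's keeping each step inside a single chase around the square. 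One point you should make explicit wherever you ``collapse'' $\{\alpha(C)\}\Delta\{\alpha(D)\}$ to $[\alpha(C)<\alpha(D)]$: isotonicity of $\alpha$ only gives $\alpha(C)\leq\alpha(D)$, and you must separately exclude equality (otherwise $K(\alpha)$ sends the chain to $\emptyset$ and the computation degenerates); the paper does this by deriving a contradiction from the square, and in your setup it also follows because the image of the whole chain dominates a nonzero difference such as $x_2\ominus x_1$.
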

\begin{proof}
From now on, let $U$ be the forgetful functor $U:\EA\to\BPos$.
Let us define a functor $G:\EA\to\BPos^T$.
For an effect algebra $A$, define $m_A:T(U(A))\to U(A)$ by the rule
$$
m_A\bigl([x_1<x_2<\dots x_{2n-1}<x_{2n}]\bigr)=(x_2\ominus x_1)\oplus\dots\oplus(x_{2n}\ominus x_{2n-1}).
$$
We claim that $G(A)=(U(A),m_A)$ is an algebra for the Kalmbach monad.
We need to prove that the diagrams (\ref{eq:algtriangle}) and (\ref{eq:algsquare}) commute.
Clearly, for every $x\in U(A)$, 
$$
(m_A\circ\eta_{U(A)})(x)=m_A([0<x])=(x\ominus 0)=x,
$$
and we see that the triangle diagram (\ref{eq:algtriangle}) commutes. Consider now the
square diagram (\ref{eq:algsquare}):
the elements of $T^2(U(A))$ are chains of chains of elements of $U(A)$;
let $[C_1<C_2<\dots<C_{2n}]\in T^2(U(A))$. 
Note that $C_i<C_j$ implies that $m_A(C_i)<m_A(C_j)$, so the elements of the
sequence
$(m_A(C_1),\dots,m_A(C_{2n}))$ are pairwise distinct. Therefore,
\begin{multline*}
\bigl(m_A\circ T(m_A)\bigr)\bigl([C_1<C_2<\dots<C_{2n}]\bigr)=\\m_A\bigl(\bigl[m_A(C_1)<m_A(C_2)<\dots <m_A(C_{2n})\bigr]\bigr)=\\
\bigl(m_A(C_2)\ominus m_A(C_1)\bigr)\oplus\dots\oplus \bigl(m_A(C_{2n})\ominus m_A(C_{2n-1})\bigr)
\end{multline*}

Note that, if $C<D$ in $T(U(A))$, then $C\Delta D<D$, $m_A(C)<m_A(D)$ and $m_A(D)\ominus m_A(C)=m_A(C\Delta D)$.
Using these facts,
\begin{multline*}
\bigl(m_A\circ\mu_{U(A)}\bigr)\bigl([C_1<C_2<\dots <C_{2n}]\bigr)=
m_A(C_1\Delta C_2\Delta\dots\Delta C_{2n})=\\
m_A\bigl((C_1\Delta C_2\Delta\dots\Delta C_{2n-1})\Delta C_{2n}\bigr)=\\
m_A(C_{2n})\ominus m_A(C_1\Delta C_2\Delta\dots\Delta C_{2n-1})=\\
m_A(C_{2n})\ominus \bigl(m_A(C_{2n-1})\ominus m_A(C_1\Delta C_2\Delta\dots\Delta C_{2n-2})\bigr)=\\
\bigl(m_A(C_{2n})\ominus m_A(C_{2n-1})\bigr)\oplus m_A(C_1\Delta C_2\Delta\dots\Delta C_{2n-2}).
\end{multline*}
The desired equality now follows by a simple induction.

If $f:A\to B$ is a morphism of effect algebras, we define $G(f)=U(f)$. We need to prove that the
diagram
$$
\xymatrix{
TU(A) \ar[r]^{TU(f)} \ar[d]_{m_A}& TU(A) \ar[d]^{m_B}\\
U(A) \ar[r]_{U(f)} & U(B)
}
$$
commutes. After some simple steps, this reduces to the following equality in $B$:
\begin{equation}
\label{eq:morphalg}
\bigl(f(x_2)\ominus f(x_1)\bigr)\oplus\dots\oplus \bigl(f(x_{2n})\ominus f(x_{2n-1})\bigr)=
m_B\bigl(\Delta_{i=1}^n\{f(x_i)\}\bigr),
\end{equation}
for each $[x_1<\dots<x_{2n}]\in T(U(A))$.

Let us define an auxiliary function $k:T(U(A))\to\mathbb N$:
for $C=[x_1<x_2<\dots<x_{2n}]\in T(U(A))$, $k(C)$ is the number of
equal consecutive pairs in the sequence $\bigl(f(x_1),\dots,f(x_n)\bigr)$, that means,
$k(C)$ is the cardinality of the set $\{i:f(x_i)=f(x_{i+1})\}$. 

To prove the equality (\ref{eq:morphalg}), we use induction with respect to $k(C)$.
If $k(C)=0$, then the equality (\ref{eq:morphalg}) clearly holds.

If $k(C)>0$, then let us pick some $i$ with $f(x_i)=f(x_{i+1})$.
Then $\{f(x_i)\}\Delta \{f(x_{i+1})\}=\emptyset$ and we
may skip them on the right hand side of (\ref{eq:morphalg}).

If $i$ is odd, then $f(x_{i+1})\ominus f(x_i)=0$ and we may delete
that term from the left-hand side of (\ref{eq:morphalg}). If $i$ is even,
then
$$
\bigl(f(x_{i})\ominus f(x_{i-1})\bigr)\oplus\bigl(f(x_{i+2})\ominus f(x_{i+1})\bigr)=
f(x_{i+2})\ominus f(x_{i-1})
$$
and we may simplify the left-hand side of (\ref{eq:morphalg}) accordingly.

So (\ref{eq:morphalg}) is true if and only if it is true for the chain
$C-\{x_i,x_{i+1}\}$. Clearly, $k(C-\{x_i,x_{i+1}\})=k(C)-1$ and we have
completed the induction step.

Let $(A,\alpha)$ be an algebra for the Kalmbach monad. Let us
define a partial operation $\ominus$ on the bounded poset $A$
given by this rule: $b\ominus a$ is defined if and only if
$a\leq b$ and
$$
b\ominus a=
\begin{cases}
0 & a=b\\
\alpha([a<b]) & a<b
\end{cases}
$$
We claim that $E(A,\alpha)=(A,\leq,\ominus,0,1)$ is then a D-poset,
hence an effect algebra.

The axiom (D1) follows by definition.

Before we prove the other two axioms, let us note that for all $a\in A$,
$a\ominus 0=a$. Indeed, if $0<a$ then the triangle diagram
(\ref{eq:algtriangle}) implies that $a=\alpha([0<a])=a\ominus 0$
and for $a=0$ we obtain $a\ominus 0=0\ominus 0=0$ by definition
of $\ominus$.

To prove (D2), let $a,b\in A$ be such that $a\leq b$.

Let us prove that $b\ominus a\leq b$.
If $a<b$, then $[a<b]\leq[0<b]$ in the poset $T(A)$ and
$$
b\ominus a=\alpha\bigl([a<b]\bigr)\leq\alpha\bigl([0<b]\bigr)=b\ominus 0=b.
$$
If $a=b$ then $b\ominus a=0\leq b$.

Let us prove that 
$b\ominus (b\ominus a)=a$
There are three possible cases.
\begin{enumerate}[leftmargin=2em]
\item[(D2.1)] Suppose that $0<a<b$. Then, $[a<b]<[0<b]$ in  $T(A)$ and hence
$\bigl[[a<b]<[0<b]\bigr]\in T^2(A)$.
Suppose that $\alpha\bigl([a<b]\bigr)=\alpha\bigl([0<b]\bigr)$.
From the commutativity of the square (\ref{eq:algsquare})
we obtain
$$
\xymatrix{
\bigl[[a<b]<[0<b]\bigr] \ar@{|->}[r]^-{T(\alpha)} \ar@{|->}[d]_{\mu_A} & \emptyset \ar@{|->}[d]^\alpha \\
[0<a] \ar@{|->}[r]_-\alpha & \alpha\bigl([0<a]\bigr)=0
}
$$
However, $0<a=\alpha\bigl([0<a]\bigr)=0$ is false and we have proved that
$\alpha\bigl([a<b]\bigr)<\alpha\bigl([0<b]\bigr)$. Chasing the element 
$\bigl[[a<b]<[0<b]\bigr]$ around the square 
$$
\xymatrix{
\bigl[[a<b]<[0<b]\bigr] \ar@{|->}[r]^-{T(\alpha)} \ar@{|->}[d]_{\mu_A} & \bigl[\alpha\bigl([a<b]\bigr)<\alpha\bigl([0<b]\bigr)\bigr] \ar@{|->}[d]^\alpha \\
[0<a] \ar@{|->}[r]_-\alpha &\alpha\bigl([0<a]\bigr)=\alpha\Bigl(\bigl[\alpha\bigl([a<b]\bigr)<\alpha\bigl([0<b]\bigr)\bigr]\Bigr)
}
$$
gives us the equality in the bottom right corner,
meaning that $b\ominus(b\ominus a)=a$.
\item[(D2.2)] Suppose that $0=a$. We already know that $b\ominus 0=b$ and
we may compute
$$
b\ominus(b\ominus a)=b\ominus(b\ominus 0)=b\ominus b=0=a.
$$
\item[(D2.3)] Suppose that $a=b$. Then
$$
b\ominus(b\ominus a)=b\ominus 0=b=a.
$$
\end{enumerate}

To prove (D3), let $a,b,c\in A$ be such that $a\leq b\leq c$.

Let us prove that $c\ominus b\leq c\ominus a$.
If $a=b$, there is nothing to prove. If $b=c$, then $b\ominus c=0\leq c\ominus a$.
Assume that $a<b<c$. Then $[b<c]<[a<c]$ and
$$
c\ominus b=\alpha\bigl([b<c]\bigr)\leq\alpha\bigl([a<c]\bigr)=c\ominus a.
$$

Let us prove that $b\ominus a=(c\ominus a)\ominus (c\ominus b)$.
\begin{enumerate}[leftmargin=2em]
\item[(D3.1)]
Suppose that $a<b<c$ and assume that $\alpha([b<c])=\alpha([a<c])$.
The square
$$
\xymatrix{
\bigl[[b<c]<[a<c]\bigr] \ar@{|->}[r]^-{T(\alpha)} \ar@{|->}[d]_{\mu_A} & \emptyset \ar@{|->}[d]^\alpha \\
[a<b] \ar@{|->}[r]_-\alpha &\alpha\bigl([a<b]\bigr)=0
}
$$
gives us $\alpha\bigl([a<b]\bigr)=0$, so $b\ominus a=0$.
However, using only the properties of $\ominus$ we already proved,
$$
b=b\ominus 0=b\ominus(b\ominus a)=a<b,
$$
which is false. Thus, assuming $\alpha\bigl([b<c]\bigr)<\alpha\bigl([a<c]\bigr)$ 
the square
$$
\xymatrix{
\bigl[[b<c]<[a<c]\bigr] \ar@{|->}[r]^-{T(\alpha)} \ar@{|->}[d]_{\mu_A} & \bigl[\alpha\bigl([b<c]\bigr)<\alpha\bigl([a<c]\bigr)\bigr] \ar@{|->}[d]^\alpha \\
[a<b] \ar@{|->}[r]_-\alpha &\alpha\bigl([a<b]\bigr)=\alpha\Bigl(\bigl[\alpha\bigl([b<c]\bigr)<\alpha\bigl([a<c]\bigr)\bigr]\Bigr)
}
$$
gives us the equality
in the bottom right corner
meaning that
$$
b\ominus a=(c\ominus a)\ominus (c\ominus b).
$$
\item[(D3.2)]
Suppose that $b=c$. Then
$$
(c\ominus a)\ominus(c\ominus b)=(c\ominus a)\ominus 0=c\ominus a=b\ominus a.
$$
\item[(D3.3)]
If $a=b$, then there is nothing to prove.
\end{enumerate}

If is now easy to check that an arrow $h:(A,\alpha)\to (B,\beta)$ in $\BPos^T$ is,
at the same time, a morphism of D-posets (and thus, a morphism of effect algebras) $E(A,\alpha)\to E(B,\beta)$.
Indeed, $h(0)=0$ and $h(1)=1$. If $a<b$ and $h(a)<h(b)$, then 
$$
h(b\ominus a)=h\bigl(\alpha([a<b])\bigr)=\beta\bigl(T(h)([a<b])\bigr)=\beta\bigl([h(a)<h(b)]\bigr)=h(b)\ominus h(a).
$$
If $a<b$ and $h(a)=h(b)$ then
$$
h(b\ominus a)=h(\alpha[a<b])=\beta\bigl(T(h)([a<b])\bigr)=\beta(\emptyset)=0=h(b)\ominus h(a).
$$

Therefore $E$ is a functor from $\BPos^T$ to $\EA$.

It remains to prove that $E,G$ are mutually inverse functors. Let 
$A$ be an effect algebra. We claim that $EG(A)=A$. 
The underlying poset of $EG(A)$ and $A$ is the same.
For all $a<b$, 
$$
b\ominus_{EG(A)} a=m_A([a<b])=b\ominus_A a,
$$
hence $EG(A)=A$. It is obvious that for every 
morphism $f:A\to B$ of effect algebras $EG(f)=f$, since
both $E$ and $G$ preserve the underlying poset maps.

Let $(A,\alpha)$ be an algebra for the Kalmbach monad. We claim that
$(A,\alpha)=GE(A,\alpha)$, that means, for all $[x_1<\dots<x_{2k}]\in T(A)$,
\begin{equation}\label{eq:last}
\alpha([x_1<x_2<\dots<x_{2k-1}<x_{2k}])=(x_2\ominus x_1)\oplus\dots\oplus (x_{2k}\ominus x_{2k-1}),
\end{equation}
where the $\oplus,\ominus$ on the right-hand side are taken in $E(A,\alpha)$. 

To prove (\ref{eq:last}) we need an auxiliary claim:
for every $C\in T(A)$ and an upper bound $u$ of $C$ with $u\notin C$,
$\alpha(C\Delta[0<u])=\alpha([\alpha(C)<u])$. This 
is easily seen by chasing the element $[C<[0<u]]\in T^2(A)$ around the square (\ref{eq:algsquare}).

Clearly, equality (\ref{eq:last}) is true for $k=0$. Suppose it is valid for some $k=n\in\mathbb N$.
Then, for $k=n+1$, equality (\ref{eq:last}) is then equivalent to
\begin{multline*}
\alpha([x_1<x_2<\dots<x_{2n-1}<x_{2n}<x_{2n+1}<x_{2n+2}])=\\ \alpha([x_1<x_2<\dots<x_{2n-1}<x_{2n}])\oplus
(x_{2n+2}\ominus x_{2n+1})
\end{multline*}
Put $C=[x_1<x_2<\dots<x_{2n-1}<x_{2n}]$.
Using the definition of $\ominus$ in $E(A,\alpha)$ and applying the auxiliary claim twice we obtain
\begin{align*}
\alpha(C)\oplus (x_{2n+2}\ominus x_{2n+1})&=\\
x_{2n+2}\ominus\bigl(x_{2n+1}\ominus\alpha(C)\bigr)&=\\
x_{2n+2}\ominus\Bigl(\alpha\bigl(\bigl[\alpha(C)<x_{2n+1}\bigr]\bigr)\Bigr)&=\\
x_{2n+2}\ominus\bigl(\alpha\bigl(C\Delta[0<x_{2n+1}]\bigr)\bigr)&=\\
\alpha\Bigl(\bigl[\alpha\bigl(C\Delta[0<x_{2n+1}]\bigr)<x_{2n+2}\bigr]\Bigr)&=\\
\alpha\bigl(\bigl[C\Delta[0<x_{2n+1}]\Delta[0<x_{2n+2}]\bigr]\bigr)&=\\
\alpha\bigl(\bigl[C\Delta[x_{2n+1}<x_{2n+2}]\bigr]\bigr)&=\\
\alpha\bigl([x_1<x_2<\dots<x_{2n-1}<x_{2n}<x_{2n+1}<x_{2n+2}]\bigr)
\end{align*}
It is obvious that for every morphism $f$ in $\BPos^T$, $GE(f)=f$.
\end{proof}

\end{document}